\newtheorem{lem}{Lemma}
\newtheorem{thm}[lem]{Theorem}
\theoremstyle{remark}
\newcommand\cl{\operatorname{cl}}
\newcommand\bbN{\mathbb{N}}
\begin{document}

\begin{center}

\Large{\bf Minimal non-orientable matroids in a projective plane}
\normalsize

{\sc Rigoberto Fl\'orez}
\footnote {The work of the first author was
performed at the State University of New York at Binghamton.}

\footnotesize
{\sc University of South Carolina Sumter\\
     Sumter, SC, U.S.A.\ 29150-2498}\\

\normalsize

and

{\sc David Forge}
\footnote {The work of the second author was performed while visiting
the State University of New York at Binghamton.}

\footnotesize
{\sc Laboratoire    de   recherche en   informatique UMR 8623 \\
 B\^at 490 Universit\'e Paris-Sud\\
91405 Orsay Cedex France} \\
{\tt forge@lri.fr}

\normalsize


\end{center}
\footnotesize {\it Abstract:} { We construct a new family of minimal
non-orientable matroids of rank three. Some of these matroids embed
in  Desarguesian projective planes. This answers a question of
Ziegler: for every prime power $q$, find a minimal non-orientable
submatroid of the projective plane over the $q$-element field.}

\normalsize

\thispagestyle{empty}

\section {introduction}
The study of non-orientable matroids has not received very much
attention compared with the study of representable matroids or
oriented matroids. Proving non-orientability of a matroid is known
to be a difficult problem even for small matroids of rank 3.
Richter-Gebert \cite{RG} even proved that this problem is
NP-complete. In the general case, there are only some necessary
conditions for a matroid to be non-orientable (see section 6.6 of
\cite{oriented}).

In 1991 Ziegler \cite {smnm} constructed a family of minimally non-orientable matroids of
rank three which are
submatroids of a projective plane over $\mathbb {F}_p$ for $p$ a prime.
These matroids are of size $3n+2$ with $n\ge2$ and the smallest is the
Mac Lane matroid on 8 elements (the only non-orientable matroid on 8 or fewer elements).
 Ziegler raised this question (\cite {oriented}, page 337):
For every prime power $q$, determine
a minimal non-orientable submatroid of the projective plane of order $q$.

We study an infinite family $\{F(n) : n \in \mathbb N \}$ of line arrangements
in the real projective plane (where $\mathbb N$ is the set of positive integers).
$F(n)$ consists of $2n+1$ lines constructed by taking the
infinite line together with a series of parallel lines going through two points.
We give an easy criterion to decide when it is possible to
extend the arrangement by  a pseudoline
passing through given  intersection vertices of $F(n)$.
This criterion gives a construction of a family of non-orientable
matroids with $2n+2$ elements for $n\ge3$. Our smallest example is again the Mac Lane matroid
but all others are different from Ziegler's arrangements.
Finally, we prove
that  a subfamily  of these non-orientable matroids embeds
in  Desarguesian projective planes coordinatized by fields of prime-power order.
This answers Ziegler's question.

The \emph {Reid cycle matroid} $R_{\text {cycle}}[k]$ for $k \geq 3$
is a certain single-element extension of our minimal non-orientable
matroid $M(n,\sigma){\mid (C\cup\{c_0,c_1\}})$ (given in  Theorem
\ref {t3}). Kung \cite [page 52]{jk} conjectured that for $k \geq
3$, the matroid $R_{\text {cycle}}[k]$ is non-orientable. McNulty
proved this conjecture \cite {{jmn}, {jn}}. Our Theorem \ref {t3}
shows that the Reid cycle matroid is not minimally non-orientable.

\section {Extension of pseudoline arrangements} \label{extension1}

We  define a family of  pseudoline
arrangements $F(n)$ of size $2n+1$ in the real projective plane.
We then study the possibility of extending such an $F(n)$
by new pseudolines going through  given sets of intersection vertices.

A pseudoline arrangement $L$ is a set of simple closed curves in the
real projective plane $\Pi$, of which each pair intersects at
exactly one point, at which they cross. Two arrangements are
\emph{isomorphic} if one is the image of the other by a continuous
deformation of the plane. An arrangement is \emph{stretchable} if it
is isomorphic to an arrangement of straight lines. The
\emph{extension} of an arrangement $L$ by a pseudoline $l$ is the
arrangement $L\cup l$ if the line $l$ meets correctly all the lines
of $L$. Given a finite set $V$ of vertices it is always possible to
draw a pseudoline going through the points of  $V$. However, given
an arrangement $L$ and a set  $V$ of points, it may be impossible to
construct an extension of $L$ by a pseudoline going through $V$.

We will use the following simple case of impossible extension.
Let $L=\{l_1,l_2\}$ be
an arrangement of two pseudolines meeting at a point $P_1$. These two lines
separate the real projective plane into two connected components
$C_1$ and $C_2$.
Let $P_2$ and $P_3$ be two points one in each of the two connected components
defined by $L$. Then there is no
extension of $L$ by a pseudoline going through the set of points
$\{P_1,P_2,P_3\}$.

Let $n$ be a positive integer.
We adopt the notation $[n]:=\{1,2,\ldots,n\}$.
Let $c_0$ be a line in the projective plane (in the affine representation
of the figures $c_0$ is the line at infinity),  and let $A$ and
$B$ be two points not on $c_0$. Let
$\{X_i : i\in [n]\}$ be a set of $n$  points of $c_0$ that appear in the order
$X_1, X_2,\ldots,X_n$ on $c_0$.
Let us call $F(n)$ a pseudoline arrangement with $2n+1$ pseudolines
$a_i$ for $ i \in [n]$, $b_i$ for $ i\in [n]$, $c_0$
such that

$$ \displaystyle { \bigcap_{i=1}^n a_i =A \text{ , } \ \ \bigcap_{i=1}^n  b_i =B,
\ \ \ \ \text{ and } \ \  a_i \cap b_i \cap c_0 = X_i} \ ,\ \forall i \in [n] .$$

Let us denote by $X_{i,j}$ the intersection point of the lines $a_{i}$ and $b_{j}$
for two different integers $i$ and $j$ (in this notation the point $X_i$ corresponds to $X_{i,i}$).

We remark that $F(n)$ is not uniquely defined but is unique up to isomorphism
(this is a key remark for the following).
Indeed, since the lines $a_i$ all meet at the vertex $A$, they also cross
there and nowhere else.
This gives then all the other crossings and their order on the lines. The points
 $A, X_{i,j}$ for $ j\in [n]$ appear on the line $a_i$ in  the order
\[ \big ( A, X_{i,1},X_{i,2},\ldots,X_{i,i-1},X_i,X_{i,i+1},\ldots,X_{i,n} \big ) \]
 and similarly the points  $B,X_{i,j}$ for $ i\in [n]$ appear on the line
$b_j$ in the order
\[ \big ( B, X_{1,j},\ldots,X_{j-1,j},X_j,X_{j+1,j},\ldots,X_{n,j} \big ).\]
$F(n)$ is stretchable; one can just  put $c_0$ at infinity and  take for the $a_i$
and $b_i$ $n$ pairs of parallel lines passing through  the two given points $A$ and $B$.
In fact, $F(n)$ is rational, i.e., it is isomorphic to an arrangement in the real
projective plane of lines defined  by equations with integer coefficients.
However, in the proofs we will not use the fact that $F(n)$ is stretchable or
rational and  for convenience in our figures we may represent $F(n)$  with pseudolines.

\begin{figure}[htpb]
\begin{center}
\includegraphics{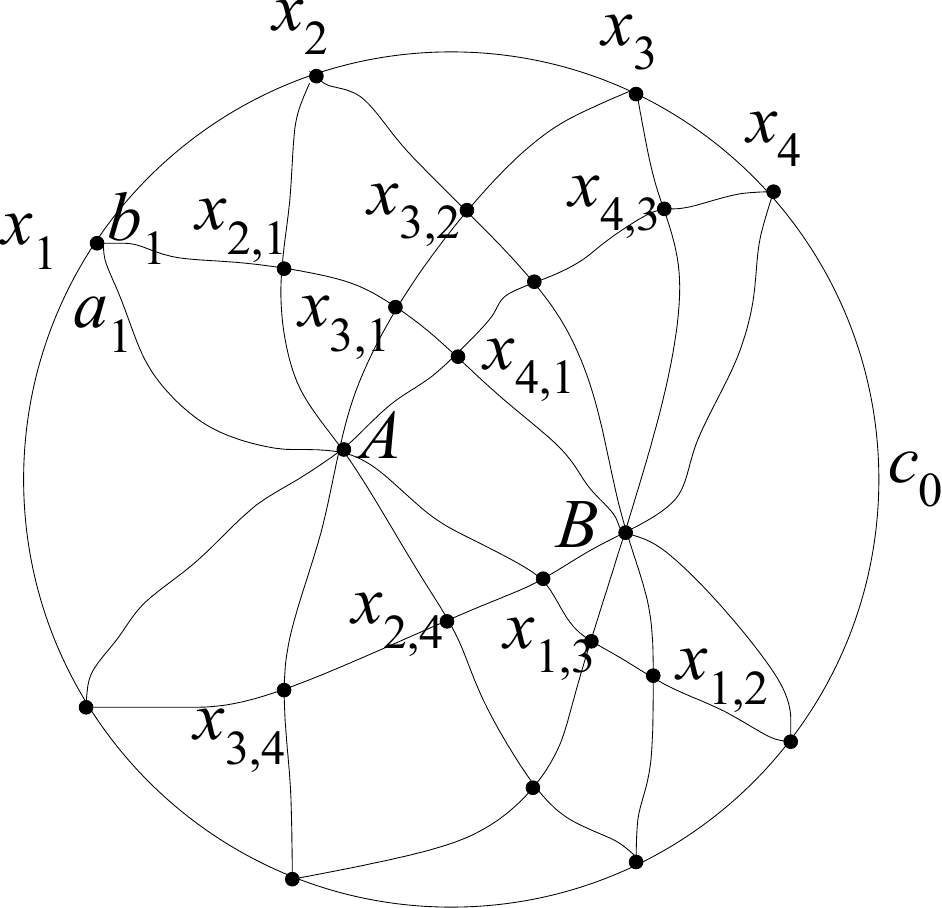}
\caption{The pseudoline arrangement $F(4)$.} \label{f1}
\end{center}
\end{figure}

\begin {lem} \label{lemmaorder}
For any integer $n\ge 3$ and any three  increasing integers $1\le
i_1<i_2<i_3\le n$, there exists an extension of the arrangement
$F(n)$ by a pseudoline passing through the three points
$X_{i_1,j_1}$, $X_{i_2,j_2}$, and $X_{i_3,j_3}$ if and only if
$j_1<j_2<j_3$ or $j_1>j_2>j_3$.
\end{lem}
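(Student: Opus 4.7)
The plan is to prove both directions of the biconditional by analyzing the two-pseudoline separation criterion stated just before the lemma at each of the three pairs $\{a_{i_k}, b_{j_k}\}$ for $k=1,2,3$, whose intersections are exactly the three prescribed points. For the key computation, I work in the stretched realization of $F(n)$ with $c_0$ at infinity, $A=(0,0)$, $B=(1,0)$, and lines $a_i, b_i$ of slope $m_i$ through $A, B$ respectively, where $m_1<\cdots<m_n$. A direct calculation yields $X_{i,j}=(m_j/(m_j-m_i),\,m_im_j/(m_j-m_i))$ for $i\neq j$, and shows that the value $(y-m_k x)(y-m_\ell(x-1))$ evaluated at $X_{i,j}$ has the sign of $(i-k)(j-\ell)$. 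Applied to the pair $\{a_{i_2},b_{j_2}\}$ and the remaining two points $X_{i_1,j_1}$ and $X_{i_3,j_3}$ (using $i_1<i_2<i_3$), this yields that these two points lie in the same connected component of $\Pi\setminus(a_{i_2}\cup b_{j_2})$ if and only if $(j_1-j_2)(j_3-j_2)<0$, equivalently $(j_1,j_2,j_3)$ is monotone. In the non-monotone case the separation criterion forbids the extension, proving the necessary direction.

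For the sufficient direction, suppose $(j_1,j_2,j_3)$ is monotone (without loss of generality increasing). The same computation applied to $\{a_{i_1},b_{j_1}\}$ and $\{a_{i_3},b_{j_3}\}$ shows that their same-component conditions reduce to ``$j_1$ is extremal'' and ``$j_3$ is extremal'' among $\{j_1,j_2,j_3\}$ respectively, while $\{a_{i_2},b_{j_2}\}$ gives ``$j_2$ is medial.'' The conjunction of these three conditions is precisely monotonicity, so monotonicity eliminates every two-pseudoline obstruction situated at a prescribed point. I would then exhibit the extending pseudoline $l$ in the stretched realization by extending the partial assignment $i_k\mapsto j_k$ to a permutation $\sigma$ of $[n]$ with at most one fixed point, drawing a smooth curve through the points $X_{i,\sigma(i)}$ for $i\in[n]$ together with a single crossing of $c_0$, and verifying by a sweeping argument that this curve can be arranged to cross every line of $F(n)$ transversally in exactly one point.

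The main obstacle is the sufficient direction: translating the absence of the three local obstructions into a globally consistent pseudoline extension. This requires either a Levi-type enlargement argument for three points in a rank-three pseudoline arrangement, or a careful direct construction that ensures the chosen curve does not accidentally cross any existing pseudoline twice. Degenerate subcases also need separate attention: if $j_k=i_k$ for some $k$, then $X_{i_k,j_k}$ lies on $c_0$ and the unique $c_0$-crossing of $l$ must be placed there; if $j_k=j_\ell$ for some $k\neq\ell$ (a necessarily non-monotone configuration) no extension is possible, since $l$ would be forced to meet $b_{j_k}$ twice.
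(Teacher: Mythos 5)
Your necessity argument is essentially the paper's: the only obstruction invoked is the two-pseudoline separation criterion at the middle vertex $X_{i_2,j_2}$, and your coordinate computation is a legitimate substitute for the paper's combinatorial bookkeeping of the order of the crossings along the lines $a_i$ and $b_j$ (legitimate because $F(n)$ is unique up to isomorphism, so one may work in a stretched model). Two small repairs are needed there: the product you evaluate equals $m_im_j(m_i-m_k)(m_j-m_\ell)/(m_j-m_i)^2$, so its sign agrees with that of $(i-k)(j-\ell)$ only if you also normalize all slopes to be positive; and when some $i_t=j_t$ the corresponding point lies on the line at infinity, so the affine sign test needs a separate word.

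The genuine gap is in the sufficiency direction, and your own diagnosis of it as ``the main obstacle'' is accurate: you do not actually produce the extending pseudoline. Verifying that the three local separation conditions at the prescribed vertices are simultaneously satisfiable when $(j_1,j_2,j_3)$ is monotone does not by itself yield a global extension, and the concrete construction you propose --- extend $i_k\mapsto j_k$ to a permutation $\sigma$ of $[n]$ and draw a curve through all $n$ vertices $X_{i,\sigma(i)}$ --- cannot work as stated. A monotone extension to a full permutation of $[n]$ generally does not exist (for $n=4$, $(i_k)=(1,2,3)$ and $(j_k)=(2,3,4)$ force $4\mapsto 1$, a $4$-cycle), and forcing the new pseudoline through these extra vertices is exactly the kind of constraint that Lemma \ref{extension} shows to be unsatisfiable; the pseudoline only needs to pass through the three prescribed vertices. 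What is actually required is the converse of the separation criterion: if $X_{i_1,j_1}$ and $X_{i_3,j_3}$ lie in the same component $C$ of $\Pi\setminus(a_{i_2}\cup b_{j_2})$, then one can route the two branches of a curve leaving $X_{i_2,j_2}$ inside $C$ to the other two points and close it up so that it crosses every remaining line of $F(n)$ exactly once. To be fair, the paper asserts this converse in one sentence without further argument, so the step you are missing is the same one the authors elide; but your proposal as written neither proves it nor proposes a workable route to it.
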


\begin{proof}
We know the order in which  the  points $A,X_{i,j} $ for $ j\in [n]$
appear on the line $a_i$ and similarly the order in which the points
$B,X_{i,j}$ for $i\in [n]$ appear on the line $b_j$. The two lines
$a_i$ and $b_j$ meeting at $X_{i,j}$ separate the projective plane
into two connected components. Hence, point $X_{i,j}$ defines a
partition of the point set $S_{i,j}=\{X_{i',j'} : i' \not=i, j'
\not=j\}$ into the two parts
\[ S_{i,j}^+= \big \{ X_{i',j'} :  (i'-i)(j'-j)>0 \big \} \text { and } S_{i,j}^-
= \big \{ X_{i',j'} : (i'-i)(j'-j)<0 \big \}. \] There exists a
pseudoline passing through $X_{i_2,j_2}$ and the two other points
$X_{i_1,j_1}$ and $X_{i_3,j_3}$ if and only if $X_{i_1,j_1}$ and
$X_{i_3,j_3}$ belong to the same part of the partition defined by
$X_{i_2,j_2}$. Since we know that $i_1<i_2<i_3$, the last statement
is equivalent to the conclusion.
\end{proof}

\begin{lem}\label{alpha}
For any integer $n$ and any injective function  $f:D\rightarrow [n]$
where $D\subseteq[n]$, there exists an extension of the arrangement
$F(n)$ by a pseudoline $c_1$  passing through the  points
$X_{i,f(i)}$,  $i\in D$, if and only if the function $f$ is
increasing or decreasing.
\end{lem}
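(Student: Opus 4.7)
The plan is to reduce the lemma to the three-point case already handled by Lemma \ref{lemmaorder}, using that monotonicity of an injection is witnessed locally by triples.

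For the ``only if'' direction I argue contrapositively. If $f$ is neither increasing nor decreasing on $D = \{d_1 < d_2 < \cdots < d_m\}$, the injective integer sequence $f(d_1), \ldots, f(d_m)$ must contain a non-monotone three-term consecutive subsequence (a local peak or valley): one locates the first index at which the sign of consecutive differences flips. This produces $i_1 < i_2 < i_3$ in $D$ with $(f(i_1), f(i_2), f(i_3))$ neither increasing nor decreasing, so any pseudoline extension of $F(n)$ through all the $X_{i,f(i)}$, $i \in D$, would in particular extend $F(n)$ through those three vertices, contradicting Lemma \ref{lemmaorder}.

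For the ``if'' direction, I assume without loss of generality that $f$ is increasing and construct $c_1$ by induction on $|D|$. The cases $|D| \le 2$ are elementary and $|D| = 3$ is Lemma \ref{lemmaorder}. For the inductive step, the key observation is that when $f$ is increasing, each vertex $X_{d_i, f(d_i)}$ lies in the positive part $S^+_{d_{i-1}, f(d_{i-1})}$ of the partition defined by its predecessor in Lemma \ref{lemmaorder}, since $(d_i - d_{i-1})(f(d_i) - f(d_{i-1})) > 0$. Using this, I would trace $c_1$ along the ``staircase'' that the vertices form in $F(n)$, verifying as I go that $c_1$ crosses each remaining pseudoline $a_k$, $b_k$, and $c_0$ exactly once. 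Equivalently, one may exploit the stretchability noted in Section \ref{extension1}: realize $F(n)$ concretely with $c_0$ at infinity and each $a_i \parallel b_i$, and then check by direct inspection that the chosen vertices admit a simple curve through them transverse to the full arrangement.

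The hard part will be this ``if'' direction: in a general pseudoline arrangement, pairwise and triplewise compatibility need not imply global realizability of an extending pseudoline, so the argument must genuinely exploit the rigid combinatorics of $F(n)$---in particular the explicit linear orderings of vertices along each line $a_i$, $b_j$, and $c_0$ established in Section \ref{extension1}---to patch the local compatibility data into a single globally valid pseudoline extension.
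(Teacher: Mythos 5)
Your proposal takes essentially the same route as the paper, whose entire proof of this lemma is the sentence ``The preceding lemma implies the conclusion'': the ``only if'' direction is exactly your observation that a non-monotone injective sequence contains a local peak or valley, i.e.\ a triple violating Lemma \ref{lemmaorder}. The ``if'' direction that you rightly flag as the real content is left just as implicit in the paper, so your staircase/stretchability sketch (using the explicit vertex orderings along each $a_i$ and $b_j$ to check that the monotone chain of vertices admits a curve crossing every line of $F(n)$ exactly once) is, if anything, more scrupulous than the source.
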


\begin{proof}
 The preceding lemma implies the conclusion.
\end{proof}

\begin{lem}\label{extension}
For any integer $n\ge2$  and for any cyclic  permutation $\alpha$ of
$[n]$, there exists an extension of the arrangement $F(n)$ by a
pseudoline passing through the points
$X_{\alpha^{i-1}(1),\alpha^i(1)}$, $ i\in [n]$, if and only if $n=2$
and $\alpha = (1 \ 2)$.
\end{lem}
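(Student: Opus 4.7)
The plan is to reduce the statement to Lemma~\ref{alpha} by a short relabeling. Write $k_i = \alpha^{i-1}(1)$ for $i \in [n]$. Because $\alpha$ is an $n$-cycle, its orbit through $1$ is all of $[n]$, so $\{k_1,\ldots,k_n\} = [n]$ (with $k_{n+1} = k_1 = 1$). Hence the set of intersection vertices we must realize,
\[
\bigl\{ X_{\alpha^{i-1}(1),\,\alpha^{i}(1)} : i \in [n] \bigr\} = \bigl\{ X_{k_i,\,k_{i+1}} : i \in [n] \bigr\} = \bigl\{ X_{k,\,\alpha(k)} : k \in [n] \bigr\},
\]
is the graph of the injection $f : [n] \to [n]$ given by $f = \alpha$. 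Lemma~\ref{alpha} therefore says that the desired extension exists if and only if $\alpha$, viewed as a function $[n]\to[n]$, is strictly monotone.

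The remaining step is an elementary classification of monotone permutations of $[n]$ and a check of when such a permutation is an $n$-cycle. There are exactly two: the identity $i \mapsto i$ (strictly increasing) and the reversal $i \mapsto n+1-i$ (strictly decreasing). The identity is never an $n$-cycle for $n \ge 2$. The reversal is an involution; I would observe that it is a single $n$-cycle iff $n = 2$ (in which case it is $(1\ 2)$), because for $n \ge 3$ the reversal either fixes $(n+1)/2$ when $n$ is odd or splits into $n/2 \ge 2$ disjoint transpositions when $n$ is even.

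Combining these observations forces $n = 2$ and $\alpha = (1\ 2)$. For the converse I would just note that in this case the two vertices to hit are $X_{1,2}$ and $X_{2,1}$, and Lemma~\ref{alpha} (applied to the decreasing injection $1 \mapsto 2,\ 2 \mapsto 1$) supplies the required pseudoline through them.

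The one place requiring care is the initial reindexing: it is essential that $\alpha$ is an $n$-cycle so that $k_1,\ldots,k_n$ exhaust $[n]$, because this is what lets us identify the relevant function with $\alpha$ itself and directly invoke Lemma~\ref{alpha}. Everything else is a one-line application of that lemma together with the (trivial) classification above.
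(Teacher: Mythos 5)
Your proposal is correct and follows essentially the same route as the paper: reduce to Lemma~\ref{alpha} (after observing that the orbit of $1$ under the $n$-cycle exhausts $[n]$, so the prescribed vertices are exactly the graph of $\alpha$), then note that a monotone permutation of $[n]$ is the identity or the reversal, neither of which is an $n$-cycle except for the reversal when $n=2$. The paper compresses the classification step into the single remark that a cyclic permutation on more than two elements cannot be increasing or decreasing, and handles $n=2$ by noting directly that any two points lie on a pseudoline; your version just makes these steps more explicit.
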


\begin{proof}
If $n\ge3$ then Lemma \ref{alpha}  applied to $\alpha$ implies that
the bijection $\alpha$ is increasing or decreasing.  But a cyclic
permutation on more than two elements cannot be increasing or
decreasing. If $n=2$ then the only cyclic permutation is
$\alpha(1)=2$ and  $\alpha(2)=1$. And clearly one can find a
pseudoline passing through the two points $X_{1,2}$ and $X_{2,1}$
(in fact, any two points).
\end{proof}

\section{Orientability of matroids} \label{extension2}

From the Folkman-Lawrence Representation Theorem, the orientability of a
rank-three matroid is equivalent to its representability by a pseudoline
arrangement in the projective plane (see \cite{oriented} for more details).
In such a representation, the elements of the matroid correspond to pseudolines
of the arrangement. Similarly, the rank-two flats of the matroid correspond to
vertices of intersection of the pseudolines.
In this section we will define a family of minimal non-orientable matroids
using Lemma \ref{extension}.

Let  $A=\{a_i : i\in[n]\}$,   $B=\{b_i : i\in[n]\}$ and $\{c_0\}$
be disjoint sets.
For $i\in[n]$, let us call $X_i$ the set $\{a_i,b_i,c_0\}$.
Let  $M'(n)$ be the simple rank-3 matroid on the ground set
$E_n=A\cup B\cup \{c_0\}$ defined by the $n+2$ non-trivial rank-two flats :
$A$, $B$, and the $n$ sets $X_i$, $i\in[n]$.

Let $\tau$ be a permutation of $[n]$. The arrangement $F(n)$ was
defined in the previous section after placing the vertices $X_i$ in
the natural order on the line $c_0$. The position of the lines $a_i$
and $b_i$ and of the vertices $X_{i,j}$ were then constructed. Let
instead place the vertices $X_i$ on the line $c_0$ in the order
$X_{\tau(1)}$, $X_{\tau(1)}$, $\ldots, X_{\tau(n)}$. By keeping the
rule that $a_i$ and $b_i$ crosses on $c_0$ at vertex $X_i$, we get a
new pseudoline arrangement that we denote by $F(n,\tau)$.

\begin{lem}\label{perm}
The representations of $M'(n)$ by pseudoline arrangements are the
arrangements $F(n,\tau)$ where $\tau$ is a permutation of $[n]$.
\end{lem}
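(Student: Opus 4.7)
The plan is to prove both directions: any pseudoline representation of $M'(n)$ is isomorphic to some $F(n,\tau)$, and conversely each $F(n,\tau)$ represents $M'(n)$.

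For the first direction, I would translate the rank-two flats of $M'(n)$ into forced incidences in any representation. The non-trivial rank-two flats $A$, $B$ and $X_i$ ($i\in[n]$) force all pseudolines $a_i$ to concur at a common point (still called $A$), all pseudolines $b_i$ to concur at $B$, and, for each $i$, the three pseudolines $a_i,b_i,c_0$ to concur at a common vertex $X_i$. Since $c_0$ crosses each of the other $2n$ pseudolines exactly once, the $n$ vertices $X_1,\ldots,X_n$ lie on $c_0$ in some cyclic order; once a starting point and a direction on $c_0$ are fixed, this order is recorded by a permutation $\tau\in S_n$ with the vertices appearing in the sequence $X_{\tau(1)},X_{\tau(2)},\ldots,X_{\tau(n)}$.

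The crux is then to show that once $\tau$ is fixed, the rest of the arrangement is determined up to isomorphism, and therefore coincides with $F(n,\tau)$. This is exactly the ``key remark'' already used for $F(n)$ itself: because the lines $a_i$ meet pairwise only at $A$ and the lines $b_j$ pairwise only at $B$, the auxiliary crossings $X_{i,j}=a_i\cap b_j$ with $i\ne j$, and their orderings along each $a_i$ and each $b_j$, are entirely dictated by the order of the $X_k$ on $c_0$. Applying that topological argument to the ordering prescribed by $\tau$ yields $F(n,\tau)$.

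For the converse, it suffices to note that in $F(n,\tau)$ the lines $a_i$ concur at $A$, the lines $b_i$ concur at $B$, and $a_i,b_i,c_0$ concur at $X_i$, while all other crossings $X_{i,j}$ with $i\ne j$ are simple double points (they cannot lie on $c_0$ without forcing $X_i=X_j$, nor at $A$ or $B$). Hence the non-trivial rank-two flats of $F(n,\tau)$ are exactly $A$, $B$, $X_1,\ldots,X_n$, so $F(n,\tau)$ represents $M'(n)$.

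The main obstacle is the uniqueness step in the third paragraph: one must be sure that with $A$, $B$, $c_0$ and the ordered vertices $X_1,\ldots,X_n$ in place, no further combinatorial freedom remains for the placement of the auxiliary crossings $X_{i,j}$. This is precisely the topological content of the earlier remark that $F(n)$ is unique up to isomorphism, and I would invoke it directly on the reordered arrangement rather than reprove it.
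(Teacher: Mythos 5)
Your proposal is correct and follows essentially the same route as the paper: the rank-two flats force the concurrences at $A$, $B$, and the $X_i$, the order of the $X_i$ on $c_0$ determines a permutation $\tau$, and the uniqueness-up-to-isomorphism remark for $F(n)$ then pins down the rest of the arrangement. You simply spell out the details (including the easy converse) that the paper's two-sentence proof leaves implicit.
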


\begin{proof}
The permutation $\tau$ fixes the order of the points $X_i$ on the line $c_0$.
Once this order is fixed, every thing else
is determined by the fact that the lines $a_i$, for $i\in[n]$, go through
the points $X_i$ and $A$ and that the lines $b_i$, for $i\in[n]$, go through
the points $X_i$ and $B$.
\end{proof}

Let $\sigma$ be a permutation of $[n]$ without fixed elements.
We denote by $M(n,\sigma)$ the matroid extension of $M'(n)$ by an element $c_1$
such that the sets  $\{a_i,b_{\sigma(i)},c_1\}$, for $i\in [n]$ are
the additional non-trivial rank-2 flats.  This means that in $M(n,\sigma)$,
the new element $c_1$
is the intersection of the rank-2 flats $\cl(a_i,b_{\sigma(i)})$, $i\in [n]$.
Note that in the pseudoline
representation of the matroid, the pseudoline $c_1$ will have to pass through
the vertices of intersection $a_i\cap b_{\sigma(i)}$.
To the permutation $\sigma$ corresponds naturally the
bipartite graph $G_\sigma$ with vertex set $A\cup B$ and with edge set
 \[ \big \{ \{a_i,b_i\} : i\in [n] \big \}\cup \big
\{ \{a_i,b_{\sigma(i)}\} : i\in[n]\big \}. \]
In the graph $G_\sigma$, two vertices $a_i$ and $b_j$ form an edge
$\{a_i,b_j\}$ if
and only if  they both belong to some 3-point line with $c_0$ or $c_1$.
The graph $G_\sigma$ is clearly 2-regular, which implies that it
is a union of disjoint cycles. Let us point out that a $2k$-cycle of
the graph $G_\sigma$ corresponds to a $k$-cycle of the permutation $\sigma$.

\begin{figure} [htbp]
\begin{center}
\includegraphics{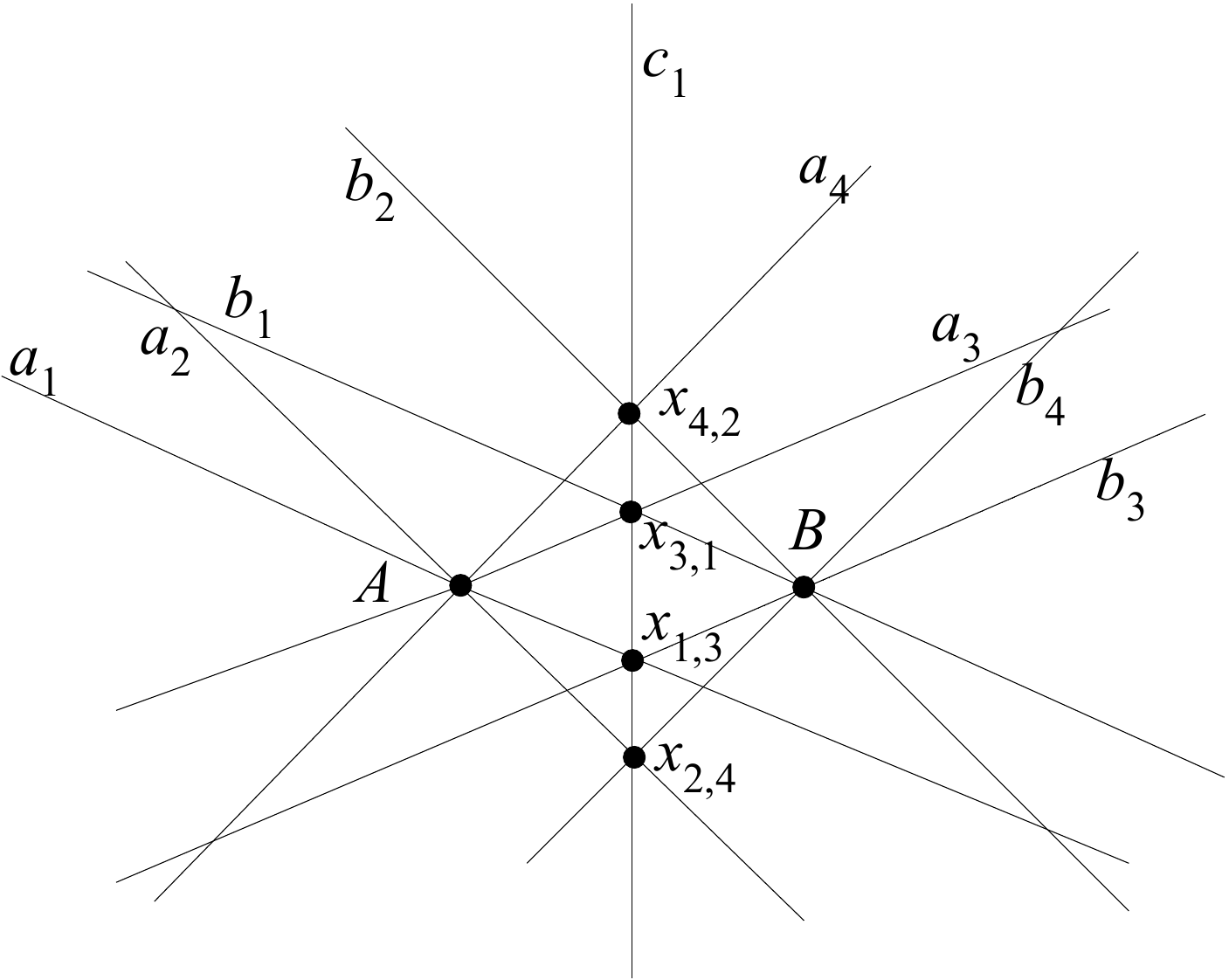}
\caption{ A linear realization of $M(4,(1 \ 3)(2 \ 4)) $}
\label{f2}
\end{center}
\end{figure}

\begin{thm}  \label{t3} Let $n\ge2$ and let $\sigma$ be a permutation of $[n]$
without fixed elements. The matroid $M(n,\sigma)$ is orientable
if and only if the graph $G_\sigma$  has no cycle of  length greater than four.
Moreover, if for some $k\ge 3$, the graph  $G_\sigma$ contains a cycle of
length $2k$,  say on the vertex set
\[ C=\big \{ a_i,a_{\sigma(i)},\ldots,a_{\sigma^{k-1}(i)},b_i,b_{\sigma(i)},
\ldots,b_{\sigma^{k-1}(i)} \big \}, \]
then the restriction $M(n,\sigma){\mid (C\cup\{c_0,c_1\}})$ is a minimal
non-orientable matroid.
\end{thm}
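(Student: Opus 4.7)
The backbone is the combination of Lemmas~\ref{perm} and~\ref{alpha}. By Folkman--Lawrence, orienting $M(n,\sigma)$ means representing it by a pseudoline arrangement. Lemma~\ref{perm} forces the subarrangement on $A\cup B\cup\{c_0\}$ to be some $F(n,\tau)$; after the relabeling $i\mapsto\tau^{-1}(i)$ that brings $F(n,\tau)$ into the standard $F(n)$, the missing pseudoline $c_1$ must pass through the vertices $X_{i,\,\tau^{-1}\sigma\tau(i)}$, which by Lemma~\ref{alpha} is possible if and only if $\tau^{-1}\sigma\tau$ is a monotonic permutation of $[n]$. This reduces Part~1 to a purely permutation-theoretic question: the only monotonic permutations of $[n]$ are the identity and the reverse $i\mapsto n+1-i$, and since $\sigma$ is fixed-point-free only the reverse is a candidate, forcing $n$ to be even and $\sigma$ to be conjugate to the reverse, i.e.\ a product of disjoint transpositions. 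A $k$-cycle of $\sigma$ corresponds to a $2k$-cycle of $G_\sigma$, so this is exactly the condition that $G_\sigma$ has no cycle of length greater than~$4$.

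Set $N=M(n,\sigma)\mid(C\cup\{c_0,c_1\})$; this is isomorphic to $M(k,\sigma_k)$ for a single $k$-cycle $\sigma_k$, so Part~1 (or a direct appeal to Lemma~\ref{extension}) shows $N$ is non-orientable. Because contractions of a rank-$3$ matroid have rank~$2$ and are automatically orientable, minimality reduces to showing that every single-element deletion of $N$ is orientable. The cyclic symmetry of $C$ and the $A\leftrightarrow B$ symmetry taking $M(k,\sigma_k)$ to $M(k,\sigma_k^{-1})$ leave three cases to handle: $N\setminus c_0$, $N\setminus c_1$, and $N\setminus a_{i_0}$.

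The deletion $N\setminus c_1$ is literally $M'(k)$ and is stretchable; $N\setminus c_0$ becomes $M'(k)$ after relabeling $b_j\mapsto b_{\sigma_k^{-1}(j)}$ and renaming $c_1$ to $c_0$, and is likewise stretchable. For $N\setminus a_{i_0}$ I would choose $\tau$ so that the vertices on $c_0$ appear in the order $X_{i_1},X_{i_2},\ldots,X_{i_{k-1}},X_{i_0}$; after the resulting relabeling, the $k-1$ incidences $c_1$ still needs to realize become $X_{t,t+1}$ for $t=1,\ldots,k-1$, corresponding to the increasing function $t\mapsto t+1$ on $\{1,\ldots,k-1\}$. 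Lemma~\ref{alpha} then supplies a pseudoline $c_1$ through these vertices, and deleting $a_{i_0}$ from the resulting arrangement produces a pseudoline representation of $N\setminus a_{i_0}$.

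The subtle point, which I expect to be the main obstacle, is verifying that the $c_1$ constructed above does \emph{not} accidentally pass through the omitted vertex $X_{i_0,i_1}$: if it did, the whole arrangement would realize $N$ itself, contradicting its non-orientability. Lemma~\ref{extension} applied to the full $k$-cycle guarantees exactly this, so the construction is self-correcting and no further genericity argument is required.
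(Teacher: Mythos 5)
Your proposal is correct, and its core --- forcing any putative representation of $M(n,\sigma)$, via Lemma~\ref{perm}, to be an extension of some $F(n,\tau)$ by a pseudoline through the vertices $X_{i,\tau^{-1}\sigma\tau(i)}$, and then invoking Lemma~\ref{alpha} --- is exactly the paper's route to non-orientability. Where you genuinely diverge is in the constructive half. The paper proves orientability when $G_\sigma$ has only $4$-cycles, and orientability of the single-element deletions, by writing down explicit straight-line coordinates (after an algorithm choosing $\tau$ so that $\tau^{-1}\sigma\tau$ is the reversal), whereas you extract everything from the existence direction of Lemma~\ref{alpha}: orientability becomes the purely combinatorial statement that $\sigma$ is conjugate to a monotone permutation, which for fixed-point-free $\sigma$ means conjugate to $i\mapsto n+1-i$, i.e.\ a fixed-point-free involution; and for $N\setminus a_{i_0}$ you reorder the $X_i$ so that the surviving incidences of $c_1$ realize the increasing map $t\mapsto t+1$. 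This is more uniform and sidesteps the coordinate computations. Two small points deserve to be made explicit. First, passing from ``a pseudoline through the prescribed vertices exists'' to ``the matroid is orientable'' requires that the extension create no unwanted incidences; here that is automatic, because $c_1$ already crosses every $a_i$ and every $b_j$ at a prescribed vertex, so it cannot also pass through $A$, $B$, or any other $X_{i,j}$, and its unique crossing with $c_0$ therefore cannot be at any $X_i$. Second, your worry about $X_{i_0,i_1}$ is correctly dispatched by Lemma~\ref{extension} --- and is in fact harmless even without it, since that vertex lies on the pseudoline $a_{i_0}$ you delete at the end, so an incidence there would not change the matroid of the deleted arrangement. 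Your reduction of minimality to single-element deletions (contractions drop the rank to at most $2$, and orientability is minor-closed) and the symmetry reduction to the three cases $c_0$, $c_1$, $a_{i_0}$ are both correct and match what the paper does implicitly.
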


\begin{proof}
If the graph $G_\sigma$ has a decomposition into  cycles of length only 4
(hence $n$ must be even), we give
an explicit realization (see Figure \ref {f2}). We first relabel
the elements using a permutation
$\tau $ defining the position of the vertices $X_i$ at infinity.
This permutation is defined
by the following algorithm.

Start with $k=1$ and $S=[n]$. While $S \neq \emptyset$ do:

\hspace{1cm} a) let $i$ be the smallest element of $S$ and set $\tau(i)\leftarrow k$
and $\tau(\sigma(i))\leftarrow n+1-k$;

\hspace{1cm} b) put $k\leftarrow k+1$ and $S\leftarrow S\setminus \{i,\sigma(i)\}$.

The algorithm stops when the permutation $\tau$ has been completely defined
(i.e., when $S$ is finally empty, which will happen after $n/2$ steps). Put the points $A$ and $B$ at
$(-1,0)$ and $(1,0)$ respectively. Using the permutation $\tau$, the following realization works:

(a) the line $a_{\tau ^{-1}(i)} $ has equation $y=-ix-i$, for $i \leq n/ 2$,

(b) the line $b_{\tau ^{-1}(i)} $ has equation $y=-ix+i$, for $i \leq n / 2$,

(c) the line $a_{\tau ^{-1}(n-i+1)}$ has equation $y=ix+i$, for $i \leq n / 2$,

(d) the line $b_{\tau ^{-1}(n-i+1)}$ has equation $y=ix-i$, for $i \leq n / 2$,

(e) the line $c_0$ is at infinity,

(f) the line $c_1$ has equation $x=0$.

If $G_\sigma$ contains a cycle $C$ of length $2k\ge 6$ then
the matroid $M(n,\sigma)(\mid C\cup\{c_0,c_1\})$ is an extension of $M'(k)$ by the
element $c_1$. By Lemma \ref{perm}, a  representation of $M'(k)$ is a pseudoline
arrangement $F(k,\tau)$ for a permutation $\tau$. Then a representation
of $M(n,\sigma){\mid C\cup\{c_0,c_1\}}$ is an extension of $M'(k)$
by a pseudoline $c_1$ going through the points $X_{\tau(i),\tau(\sigma(i))}$, $i\in [n]$.
By Lemma \ref{extension}, this is impossible.

Let us now prove the minimality of $M(n,\sigma)\mid (
C\cup\{c_0,c_1\})$ as a non-orientable matroid. If one of the $c_i$
is deleted we get a matroid isomorphic to $M'(n)$, which is
orientable. If we delete one of the $a_i$ or one of the $b_i$  (say
$a_1$) then the matroid $M(n,\sigma)\mid C \setminus a_1$ is
realized by the following line arrangement in the real projective
plane:  put the points $A$ and $B$ at $(0,0)$ and $(1,0)$
respectively and

(a) the line $a_i $ has equation $x=ix$, for $2\le i\le k$,

(b) the line $b_i $ has equation $y=ix+1$, for $2\le i\le k$,

(c) the line $c_0$ is at infinity,

(d) the line $c_1$ has equation $y=1$. \qedhere
\end{proof}

\begin{figure}[htpb]
\begin{center}
\includegraphics{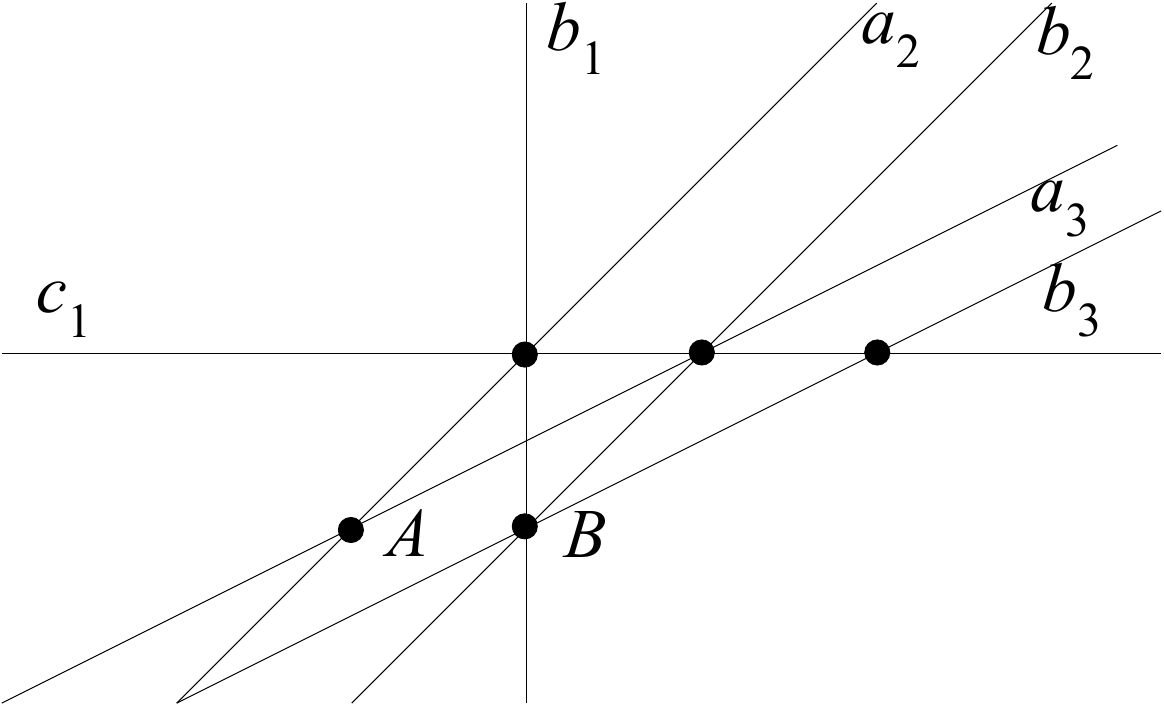}
\caption{  A linear realization of $M(3,(1 \ 2 \ 3))\setminus a_1$.} \label{f3}
\end{center}
\end{figure}

\section {Minimal non-orientable matroids contained in a projective plane} \label{last}

In this section we will define a  simple matroid $M(\mathfrak{G},
g_0,g_1 )$ where the definition of the lines depends on a given
group $\mathfrak{G}$ and two fixed elements of  $\mathfrak{G}$. We
will see that this matroid is a particular case of $M(n,\sigma)$.
The special case $M(\mathbb{Z}_n, 0, 1 )$ is a submatroid of a
non-orientable matroid given by McNulty \cite {jn}. If a finite
field $F$ contains $\mathfrak{G}$ as a multiplicative or an additive
subgroup then $M(\mathfrak{G}, g_0,g_1 )$ embeds in the projective
plane coordinatized by $F$. In Lemma \ref {bias6} (which follows by
Theorems 2.1 and 4.1 in \cite {b4}, because $M(\mathfrak{G}, g_0,g_1
)$ is a bias matroid of a gain graph) we prove this fact for finite
fields. With this lemma and Theorems \ref {mg} and \ref{Ziegler} we
will answer Ziegler's question.

Let $p^t$ be a prime power and let  $\mathbb{F}_{p^t}$ be a Galois
field. We will denote  by $\Pi_{p^t}$ the projective plane
coordinatized by $\mathbb{F}_{p^t}$. The points and lines of
$\Pi_{p^t}$ will be denoted by $[x,y,z]$ for $x,y,z$ in
$\mathbb{F}_{p^t}$, not all zero and $ \langle a ,b ,c
\rangle := \{ [x,y,z] : ax +by + cz = 0 \}$ for $a,b,c$ in
$\mathbb{F}_{p^t}$, not all zero.

Let $\mathfrak{G}$ be a finite group of order $n$ and let $g_0,g_1$
be two of its elements. Let $A=\{a_g : g\in\mathfrak{G}\}$, $B=\{b_g
: g\in\mathfrak{G}\}$ and $\{c_{g_0},c_{g_1}\}$ be disjoint sets.
Let $M( \mathfrak{G}, g_0,g_1 )$  be the simple matroid of rank 3 on
the ground set $E:= A\cup B \cup \{ c_{g_0}, c_{g_1} \}$ defined by
the $2n+2$ non-trivial rank-2 flats $A$, $B$, and the $2n$ sets $\{
a_{g}, b_{g \cdot g_0}, c_{g_0}  \}$, $g\in  \mathfrak{G}$, and $ \{
a_{g}, b_{g \cdot g_1}, c_{g_1} \}$, $g\in  \mathfrak{G}$.

\begin{thm} \label{mg} Let $g_0$ and $g_1$ be two different elements
of a finite abelian group $\mathfrak{G}$. Let $r$ be the order of
$g_0\cdot g_1^{-1}$. Then $ M( \mathfrak{G}, g_0,g_1 )$ is
non-orientable if and only if $r \geq 3$.
\end{thm}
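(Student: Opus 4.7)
The plan is to identify $M(\mathfrak{G}, g_0, g_1)$ with a matroid of the form $M(n, \sigma)$ from Section \ref{extension2} and then read off the answer from Theorem \ref{t3}, which characterizes orientability of $M(n,\sigma)$ in terms of cycle lengths of the graph $G_\sigma$. The key observation will be that the relevant $\sigma$ turns out to be a translation in the abelian group $\mathfrak{G}$, so that all its cycles have the same length, determined by the element $g_0 \cdot g_1^{-1}$.

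To set up the identification, I fix any enumeration $\mathfrak{G} \leftrightarrow [n]$ with $n = |\mathfrak{G}|$ and relabel $B$ by the bijection $\tilde b_g := b_{g \cdot g_0}$. Under this relabeling, the flats $\{a_g, b_{g\cdot g_0}, c_{g_0}\}$ become $\{a_g, \tilde b_g, c_{g_0}\}$, matching the $c_0$-lines of the template $M'(n)$, while the flats $\{a_g, b_{g\cdot g_1}, c_{g_1}\}$ become $\{a_g, \tilde b_{\sigma(g)}, c_{g_1}\}$, where $\sigma(g) := g\cdot g_0^{-1}g_1$. Together with the flats $A$ and $B$, this exhibits $M(\mathfrak{G},g_0,g_1)\cong M(n,\sigma)$, with $c_{g_0}$ and $c_{g_1}$ playing the roles of $c_0$ and $c_1$.

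Next I analyze $\sigma$. It is multiplication by $h := g_0^{-1}g_1$; since $g_0\neq g_1$ we have $h\neq e$, so $\sigma$ has no fixed points and every cycle of $\sigma$ has length equal to the order of $h$. Abelianness gives $h^{-1}=g_0 g_1^{-1}$, so this common cycle length is exactly $r$. Consequently $G_\sigma$ is a disjoint union of cycles all of length $2r$, and Theorem \ref{t3} tells us that $M(n,\sigma)$ is orientable iff no such cycle exceeds length four, i.e., iff $r\le 2$. Since $r=1$ is ruled out by $g_0\neq g_1$, non-orientability is equivalent to $r\ge 3$.

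The proof is essentially bookkeeping once the correct relabeling is chosen, and I do not anticipate a real obstacle. The one point requiring a moment's care is verifying that the bijection $b_g\mapsto\tilde b_g$ sends each of the two families of $c_{g_i}$-flats onto a template family, and that the resulting $\sigma$ is fixed-point-free with uniform cycle length equal to the order of $g_0 g_1^{-1}$; abelianness is used precisely here to guarantee that all cycles share a common length.
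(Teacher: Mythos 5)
Your proposal is correct and follows essentially the same route as the paper: identify $M(\mathfrak{G},g_0,g_1)$ with an $M(n,\sigma)$ via the relabeling $b_g\mapsto b_{g\cdot g_0}$ (the paper's bijection $\beta$), observe that $\sigma$ is translation by $g_0^{-1}g_1$ so every cycle of $G_\sigma$ has length $2r$, and invoke Theorem \ref{t3}. The only cosmetic difference is that you index by $\mathfrak{G}$ directly instead of passing through an explicit enumeration $\alpha:[n]\to\mathfrak{G}$.
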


\begin{proof} Let $n$ be the order of $\mathfrak{G}$. Let us first note that
$M( \mathfrak{G}, g_0,g_1 )$  is  isomorphic to an $M(n,\sigma)$.
Let $\alpha$ be a bijection from $[n]$ to $\mathfrak{G}$. Let
$\beta$ be the bijection from $[n]$ to $\mathfrak{G}$ defined by
$\beta(i)=\alpha(i)\cdot g_0$. Let   $\sigma$ be the permutation on
$[n]$ defined by $\sigma (i)= \beta^{-1}\big(\alpha(i)\cdot
g_1\big)$. The permutation $\sigma$  is clearly without fixed
elements. We now have an isomorphism $\phi$ between $M(n,\sigma)$
and $M(\mathfrak{G}, g_0,g_1 )$ given by $\phi(c_0) = c_{g_0}$,
$\phi(c_1) = c_{g_1}$, $\phi(a_i)=a_{\alpha(i)}$ and
$\phi(b_i)=b_{\beta(i)}$.

Let $G$ be the graph with vertex set $ \{ a_{g} : g \in \mathfrak{G}
\} \cup  \{ b_{g} : g \in \mathfrak{G} \} $ and edges  $\{ a_{g},
b_{g'} \}$ such that $ \{ a_{g}, b_{g'}, c_{g_0}  \}$ or $  \{
a_{g}, b_{g'}, c_{g_1}  \}$ is a line of  $ M( \mathfrak{G}, g_0,g_1
)$. This graph is the graph $G_\sigma$ for the corresponding
permutation $\sigma$.

A cycle of $G$ has the form
\[  \big \{  a_g ,  b_{g\cdot g_0} , a_{g \cdot g_0 \cdot g_1^{-1}} ,
b_{g\cdot g_0\cdot (g_1^{-1}\cdot g_0)} , a_{g\cdot (g_0\cdot
g_1^{-1})^2} ,\ldots , a_{g\cdot (g_0\cdot g_1^{-1})^{r-1}} ,
b_{g\cdot g_0 \cdot ( g_1^{-1} \cdot g_0)^{r-1}} \big \}.\]
Therefore the length of a cycle of $G$  is $2 r$. So, Theorem \ref
{t3} implies that $M( \mathfrak{G}, g_0,g_1 )$ is non-orientable if
and only if $r\ge3$.
\end{proof}

\begin{lem} \label{bias6} Let $p$ be a prime number and let $m\ge 2$ and $t\ge 1$ be two integers.

(i) $M( \mathbb{Z}_{p}, 0, 1 )$ embeds in $\Pi_p$.

(ii) If $m$ divides $p^t -1$, then $M(\mathbb{Z}_m,0, 1)$ embeds in $\Pi_{p^t}$.
\end{lem}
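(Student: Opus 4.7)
The plan is to give explicit projective coordinates in $\Pi_{p^t}$ for the $2m+2$ matroid elements of $M(\mathbb{Z}_m,0,1)$ (with $m=p$, $t=1$ in part (i)) and verify that the incidences match exactly the non-trivial rank-$2$ flats listed in the definition of $M(\mathfrak{G},g_0,g_1)$. The natural set-up, which also explains the figures of the previous section, is to place the flat $A$ on one line $\ell_A$, the flat $B$ on a second line $\ell_B$, and to view $c_0$, $c_1$ as the two centers of the projectivities $\ell_A\to\ell_B$ sending $a_g\mapsto b_g$ and $a_g\mapsto b_{g+1}$. Once the shift $g\mapsto g+1$ of $\mathbb{Z}_m$ is realized by an order-$m$ projectivity of $\bbP^1$ over the coordinate field, the second center exists and the embedding follows.

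For \textbf{(i)} the shift is the affine translation $x\mapsto x+1$, a projectivity fixing the line at infinity. I would set
\[
a_g=[g,0,1],\qquad b_g=[g,1,1],\qquad c_0=[0,1,0],\qquad c_1=[1,1,0]
\]
for $g\in\mathbb{F}_p$, so that $\{a_g,b_g,c_0\}$ is the vertical line $x=g$ and $\{a_g,b_{g+1},c_1\}$ is the diagonal $y=x-g$. For \textbf{(ii)} the hypothesis $m\mid p^t-1$ produces an element $\omega\in\mathbb{F}_{p^t}^{\times}$ of order exactly $m$, and multiplication by $\omega$ is a projectivity of $\bbP^1(\mathbb{F}_{p^t})$ of order $m$ fixing $0$ and $\infty$. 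Here I would take $a_g=[\omega^g,0,1]$, $b_g=[\omega^g,1,1]$, $c_0=[0,1,0]$, and then solve for $c_1$ by requiring that it be collinear with $a_g$ and $b_{g+1}$ for every $g$; parametrising the line through $a_g$ and the unknown $c_1$ and intersecting with $\ell_B=\{y=1\}$ gives the unique solution $c_1=[0,1,1-\omega]$, a legal point of $\Pi_{p^t}$ since $\omega\neq 1$.

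The main remaining obstacle, which is routine rather than conceptual, is to confirm that no \emph{unwanted} collinearity appears in either configuration. Three $a_g$'s (resp.\ three $b_g$'s) can only be collinear along $\ell_A$ (resp.\ $\ell_B$), and one checks that neither $c_0$, $c_1$ nor any $b_h$ (resp.\ $a_h$) lies on that line. The line $c_0c_1$ is the line at infinity in (i) and $x=0$ in (ii); in either case it contains no $a_g$ or $b_g$, since these are affine with $x$-coordinate $g\in\mathbb{F}_p$ or $\omega^g\neq 0$ respectively. Finally, collinearity of $a_g$, $b_h$ and $c_0$ (resp.\ $c_1$) reduces to the projectivity computation above and pins down $h$ uniquely as $g$ (resp.\ $g+1$). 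Each verification is a one-line calculation in $\mathbb{F}_{p^t}$, and together they establish that the constructed configurations are honest embeddings of $M(\mathbb{Z}_m,0,1)$ into $\Pi_{p^t}$.
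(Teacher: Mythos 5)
Your proof is correct and follows essentially the same strategy as the paper: exhibit explicit homogeneous coordinates for the $2m+2$ points and verify by a determinant/incidence computation that the collinear triples are exactly the prescribed rank-2 flats. The only difference is cosmetic — in part (ii) the paper places $\ell_A$ and $\ell_B$ as concurrent lines with both $c_0=[1,\phi(0),0]$ and $c_1=[1,\phi(1),0]$ at infinity, whereas you take $\ell_A\parallel\ell_B$ with $c_1=[0,1,1-\omega]$ affine; both coordinatizations work and your verification (including the check that no unwanted collinearities occur) is sound.
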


\begin {proof}[Proof of (i)]
Let $\psi$ be the map  from the ground set of $M( \mathbb{Z}_p, 0, 1
)$ into the point set of $\Pi_p$ defined as follows:
\[ \psi (a_i) = [0,i,1],\  \psi (b_i) = [1,i,1], \ \psi (c_0) = [1,0,0], \
\psi (c_1) = [1,1,0] \text { for } i \in  \mathbb{Z}_{p}. \]
By the definition of the incidence relation between points and lines in $\Pi_{p}$,

$ \big \{ [ 0,i, 1]: i \in \mathbb{Z}_p \big \} \subseteq \langle -1,0, 0 \rangle$,

$\big \{ [ 1,i,1]:  i \in \mathbb{Z}_p \} \subseteq  \langle -1,0,1 \rangle $,   and

$\big  \{ [1,0,0], [1,1,0] \} \subseteq  \langle 0,0,1 \rangle .$

Now, for fixed $i,j \in \mathbb{Z}_p$ and fixed $k \in \{0,1\}$, it is easy to verify that
$\psi( \{ a_i,b_j, c_k  \})$ is collinear in $\Pi_p$ if and only if $j=i + k$.
\end {proof}

\begin {proof}[Proof of (ii)]
Let $\phi $ be an isomorphism between the group $\mathbb {Z}_m$ and
the cyclic subgroup of order $m$ of the multiplicative group
$\mathbb{F}_{p^t}^*$ (such isomorphism exist because $m$ divides
$p^t-1$). Let $\psi$ be a map from the ground set of $M(
\mathbb{Z}_m, 0, 1 )$ into the point set of $\Pi_{p^t}$ defined as
follows:
$$ \psi (a_i) = [\phi (i),0,1], \ \psi (b_i) = [0,- \phi (i),1], $$
$$ \psi (c_0) = [1,\phi (0),0],\  \psi (c_1) = [1,\phi (1),0]  \text {  for } i \in \mathbb{Z}_{m}.$$

By the definition of the incidence  relation between points and
lines in $\Pi_{p^t}$,
$$\big \{ [\phi(i),0, 1]: i  \in \mathbb {Z}_{m} \big \} \subseteq \langle 0,1, 0 \rangle,$$
$$\big \{ [ 0,- \phi (i),1]:  i \in \mathbb {Z}_{m} \big \} \subseteq  \langle 1,0,0 \rangle , \text { and }$$
$$\big \{[1,\phi (0),0], [1,\phi (1),0] \big \} \subseteq  \langle 0,0,1 \rangle.$$

Now, for fixed $i,j \in \mathbb{Z}_m$  and fixed $k \in \{0,1\}$ it
is easy to verify that $\psi (\{ a_i,b_j, c_k \}) $ is collinear in
$\Pi_{p^t}$ if and only if $j=i+k$.
\end {proof}

\begin{thm} \label{Ziegler} Let $p \geq 3$ be a prime number and let
$m \geq 3$ and $t\ge 1$ be two integers.

(i) $M( \mathbb{Z}_p, 0, 1 )$  is a minimal non-orientable matroid that embeds in $\Pi_p$.

(ii) If $m $ is a divisor of $p^t -1 $ then $M( \mathbb{Z}_m, 0, 1 )$ is a minimal
non-orientable matroid that embeds in  $\Pi_{p^t}$.

(iii) $M( \mathbb{Z}_{p^t-1}, 0, 1 )$ is a minimal non-orientable matroid
that embeds in  $\Pi_{p^t}$ and in none of the $\Pi_{p^k}$ for $k < t$.

\end{thm}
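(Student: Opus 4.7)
The plan is to derive all three parts from Theorem \ref{mg}, the "Moreover" clause of Theorem \ref{t3}, and Lemma \ref{bias6}, with one additional point-counting argument for the non-embedding claim in (iii).

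First I would handle non-orientability uniformly. In $\mathbb{Z}_m$ (written additively), $g_0 \cdot g_1^{-1} = 0 - 1 = -1$, whose order is $m$. Since $m \geq 3$, Theorem \ref{mg} gives that $M(\mathbb{Z}_m, 0, 1)$ is non-orientable. Next, for minimality, I would revisit the isomorphism in the proof of Theorem \ref{mg}: $M(\mathbb{Z}_m,0,1) \cong M(m,\sigma)$ for the corresponding fixed-point-free permutation $\sigma$, and the cycle-length computation there shows that $G_\sigma$ is one single cycle of length $2r = 2m$, a Hamilton cycle on $A \cup B$. Consequently, in the notation of Theorem \ref{t3}, the set $C$ equals $A \cup B$ and the restriction $M(m,\sigma)\mid (C \cup \{c_0,c_1\})$ is the whole matroid $M(\mathbb{Z}_m,0,1)$. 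Applying the "Moreover" clause of Theorem \ref{t3} (valid because $m \geq 3$ forces the cycle length to be $\geq 6$), this matroid is minimal non-orientable. This disposes of the minimality assertion in all three parts.

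The embedding statements in (i) and (ii) are then immediate: they are precisely Lemma \ref{bias6}(i) and (ii). Part (iii) says in addition that $M(\mathbb{Z}_{p^t-1}, 0, 1)$ embeds in $\Pi_{p^t}$ and in no smaller $\Pi_{p^k}$. The embedding in $\Pi_{p^t}$ is Lemma \ref{bias6}(ii) applied with $m = p^t - 1$, which trivially divides $p^t - 1$.

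The only genuinely new step is the non-embedding in $\Pi_{p^k}$ for $1 \leq k < t$. The argument I propose is a counting argument using the line $A$: in $M(\mathbb{Z}_{p^t-1}, 0, 1)$ the flat $A$ has $|A| = p^t - 1$ elements, all lying on a common line of the matroid. Any line of $\Pi_{p^k}$ has exactly $p^k + 1$ points, so an embedding would force $p^t - 1 \leq p^k + 1$. But for $p \geq 3$ and $1 \leq k < t$ one has $p^t - p^k = p^k(p^{t-k}-1) \geq p(p-1) \geq 6 > 2$, contradicting that inequality. Hence no such embedding exists.

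Nothing in this plan feels like a serious obstacle; the conceptual nugget is the observation that $G_\sigma$ for $(\mathbb{Z}_m, 0, 1)$ is a single Hamilton cycle, which is what lets the minimality clause of Theorem \ref{t3} apply to the full matroid rather than merely to a proper restriction. The only step that requires a non-algebraic idea is the line-size argument for (iii), and that is routine once one remembers that the two "big" flats $A$ and $B$ are forced to sit on lines of the ambient plane.
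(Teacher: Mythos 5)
Your proposal is correct and follows essentially the same route as the paper: parts (i) and (ii) from Theorem \ref{mg} together with Lemma \ref{bias6}, and the non-embedding in (iii) from the fact that the $(p^t-1)$-point line $A$ cannot fit on a line of $\Pi_{p^k}$, which has only $p^k+1$ points. Your explicit observation that $g_0-g_1=-1$ has order $m$, so that $G_\sigma$ is a single Hamilton cycle and the ``Moreover'' clause of Theorem \ref{t3} applies to the \emph{whole} matroid, is exactly the step the paper leaves implicit when it attributes minimality to Theorem \ref{mg}; making it explicit is a genuine improvement in rigor, not a change of method.
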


\begin{proof} Parts (i) and (ii) follow by Theorem \ref {mg} and Lemma \ref {bias6}.

As a consequence of part (ii) $M( \mathbb{Z}_{p^t-1}, 0, 1 )$ is a
minimal non-orientable matroid in $\Pi_{p^t}$. Since $M(
\mathbb{Z}_{p^t-1}, 0, 1 )$ has a line with $p^t-1$ points, $M(
\mathbb{Z}_{p^t-1}, 0, 1 )$ does not embed in $\Pi_{p^k}$ for $k <
t$.
\end{proof}

The matroids given in parts $(i),$ $ (ii)$, and $(iii)$ of the
previous theorem are new minimal non-orientable matroids embeddable
in projective planes, except for $M( \mathbb{Z}_3, 0, 1 )$, which is
the Mac Lane matroid. Part  $(iii)$ answers Ziegler's question.

\section {Concluding remarks} \label{remarks}

At no moment in the previous sections did we really need to have a
finite set of points. We could have considered infinite rank-3
matroids and infinite pseudoline arrangements. For a permutation on
$\bbN$ without fixed elements, we can define the rank-3 infinite
matroid  $M(\bbN,\sigma)$ on the set $\{a_i :  i\in \bbN\}\cup\{b_i
: i\in \bbN\}\cup\{c_0,c_1\}$ by taking for its non-trivial rank-2
flats $A=\{a_i : i\in \bbN\}$, $B=\{b_i:i\in \bbN\}$,
$X_i=\{a_i,b_i,c_0\} $, $i\in \mathbb N$, and $\{
a_i,b_{\sigma(i)},c_1\}$, $i\in \mathbb N$.  The permutation
$\sigma$, as in the finite case, also defines a graph $G_\sigma$ on
the vertex set $A \cup B$. This graph is infinite but still of
degree 2. This implies that $G_\sigma$ is a union of cycles and
infinite 2-way paths. We then have the following results, which are
similar to Theorems \ref{t3} and \ref{mg}:

\begin{thm}   Let  $\sigma$ be a permutation of $\bbN$ without fixed elements.
The matroid $M(\bbN,\sigma)$ is orientable if and only if the graph
$G_\sigma$  has no cycle of  length greater than four.
\end{thm}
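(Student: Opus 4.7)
I plan to mirror the argument of Theorem~\ref{t3}. For the ``only if'' direction, if $G_\sigma$ contains a cycle of length $2k\ge 6$, let $C$ be its vertex set. The restriction $M(\bbN,\sigma)\mid(C\cup\{c_0,c_1\})$ is a finite matroid isomorphic to the $M(k,\sigma')$ arising from the $k$-cycle of $\sigma$ supported on $C$. By Theorem~\ref{t3} this restriction is non-orientable, and since every restriction of an orientable matroid is orientable, $M(\bbN,\sigma)$ itself is non-orientable.

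For the ``if'' direction, assume $G_\sigma$ has no cycle of length greater than four, so that every finite orbit of $\sigma$ is a transposition and the remaining orbits are infinite. I will invoke the Tychonoff-style compactness statement that a rank-three matroid on an arbitrary ground set is orientable if and only if every finite sub-matroid is orientable; this follows from the fact that the chirotope axioms are local in the space $\{-,0,+\}^{E^3}$. It therefore suffices to show orientability of each restriction $M(\bbN,\sigma)\mid S$ with $S$ a finite subset of the ground set. For such an $S$, the induced graph $G_\sigma\mid S$ is a subgraph of $G_\sigma$, so it inherits the property of having no cycle of length greater than four; its components are $4$-cycles, from complete transposition orbits of $\sigma$ contained in $S$, together with paths, from finite segments of infinite orbits clipped off by $S$.

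To realize $M(\bbN,\sigma)\mid S$ explicitly I extend the algorithm from the proof of Theorem~\ref{t3}. The complete $4$-cycles are placed exactly as there by assigning the symmetric slope pair $(\pm k)$ to the $k$th transposition through $A=(-1,0)$ and $B=(1,0)$, with $c_0$ the line at infinity and $c_1=\{x=0\}$. For each path component, which carries no cycle-closing incidence, I place the path's elements in monotonic $\tau$-order along $c_0$; the induced function on positions is then strictly monotone on the path, and the finite version of Lemma~\ref{alpha} yields the required $c_1$-segment through the path's intersection vertices. The orbit pieces are merged into one arrangement in disjoint regions of the plane with a common $c_0$ and $c_1$ (possibly allowing $c_1$ to be a genuine pseudoline rather than a straight line).

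The main technical obstacle is checking that these local realizations combine consistently for arbitrary finite $S$, that is, that no triple of intersection vertices drawn from different orbit pieces yields a local obstruction for $c_1$. Here the hypothesis that $G_\sigma$ has no cycle of length greater than four is precisely what prevents such obstructions, via the partition criterion in the proof of Lemma~\ref{lemmaorder}: the only way three points $X_{i,\sigma(i)}$ can fail the partition test together is if they sit on a common cycle of length at least six in $G_\sigma$, which by hypothesis does not occur. Verifying this triple-by-triple, together with the compactness step, completes the proof.
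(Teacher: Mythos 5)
The paper offers no proof of this theorem (it is stated in the concluding remarks as ``similar to Theorem~\ref{t3}''), so your architecture---restriction to a long cycle for non-orientability, and a compactness reduction to finite restrictions for orientability---is surely what the authors intend, and your ``only if'' direction is correct as written. The compactness step is also acceptable, provided orientability of an infinite matroid is taken in the chirotope sense (the sets of partial chirotopes form closed subsets of the compact space $\{-,0,+\}^{E^3}$ with the finite intersection property); this deserves a sentence since the paper never defines orientability for infinite matroids.

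The gap is in your realization of the finite restrictions. First, the components of $G_\sigma\mid S$ cannot be ``merged into one arrangement in disjoint regions of the plane'': every $a_i$ must pass through the common point $A$ and every $b_j$ through $B$, so the components necessarily interleave, and what you must actually produce is a single global order $\tau$ of the indices on $c_0$ making $i\mapsto\sigma(i)$ monotone on its domain (Lemma~\ref{alpha}). Second, your claim that ``the only way three points $X_{i,\sigma(i)}$ can fail the partition test is if they sit on a common cycle of length at least six'' is false: whether a triple fails depends on the chosen order, not on the cycle structure. For two transpositions $(1\,2)$ and $(3\,4)$ placed in the identity order, the triple $X_{1,2},X_{2,1},X_{3,4}$ already violates Lemma~\ref{lemmaorder} ($1<2<3$ but $2,1,4$ is not monotone); the entire point of the algorithm in Theorem~\ref{t3} is to \emph{nest} the transpositions to avoid exactly this. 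Third, your plan to put each path ``in monotonic $\tau$-order'' makes the induced map increasing, while any transposition present forces the global map to be decreasing (an involution with $i\neq j$ can never be order-preserving), so the two prescriptions are incompatible. The missing construction is a global \emph{decreasing} order: split each path $i_1\to i_2\to\cdots\to i_k$ into its odd- and even-indexed halves, treat each transposition as a left/right pair, and nest all left halves before all reversed right halves, $L_1L_2\cdots L_mR_m\cdots R_2R_1$; one checks that $\sigma$ is then order-reversing on its whole domain, and Lemma~\ref{alpha} applies. Without this (or an equivalent) explicit ordering, the ``if'' direction is not established.
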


\begin{thm}
Suppose that  $ \mathfrak{G}$ is a finitely generated abelian group.
Then $ M( \mathfrak{G}, g_0,g_1 )$ is non-orientable if and only if
the order of $g_0\cdot g_1^{-1}$ is finite and greater than $2$.
\end{thm}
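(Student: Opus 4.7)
The strategy is to mimic the proof of Theorem~\ref{mg} almost verbatim, replacing its appeal to Theorem~\ref{t3} by the infinite analog stated immediately above. Since every finitely generated abelian group is countable, I fix a bijection $\alpha\colon\bbN\to\mathfrak{G}$; if $\mathfrak{G}$ is finite, Theorem~\ref{mg} already gives the result, so I may assume $\mathfrak{G}$ is countably infinite. Set $\beta(i)=\alpha(i)\cdot g_0$ and $\sigma(i)=\beta^{-1}\bigl(\alpha(i)\cdot g_1\bigr)$. Because $g_0\neq g_1$, the permutation $\sigma$ has no fixed points, and the assignment $c_0\mapsto c_{g_0}$, $c_1\mapsto c_{g_1}$, $a_i\mapsto a_{\alpha(i)}$, $b_i\mapsto b_{\beta(i)}$ defines an isomorphism $M(\bbN,\sigma)\cong M(\mathfrak{G},g_0,g_1)$, exactly as in the proof of Theorem~\ref{mg}.

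Next I would describe the associated graph $G_\sigma$ directly in terms of $\mathfrak{G}$. Under the isomorphism, an edge at $a_g$ goes either to $b_{g\cdot g_0}$ or to $b_{g\cdot g_1}$; alternating these two kinds of edges amounts to multiplying by $h:=g_0\cdot g_1^{-1}$. Hence the connected component of $a_g$ consists of the vertices $a_{g\cdot h^k}$ and $b_{g\cdot g_0\cdot h^k}$ as $k$ ranges over $\bbZ$. If $h$ has finite order $r$, this component is a cycle of length $2r$; if $h$ has infinite order, it is an infinite two-way path containing no cycle.

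Finally, I invoke the infinite analog of Theorem~\ref{t3}: $M(\bbN,\sigma)$ is orientable if and only if $G_\sigma$ contains no cycle of length greater than four. Combining with the orbit analysis, if $h$ has infinite order then $G_\sigma$ has no cycle at all and the matroid is orientable; if $h$ has order $2$ then every cycle has length $4$ and the matroid is still orientable; if $h$ has order $r\geq3$ then $G_\sigma$ has a cycle of length $2r\geq 6$ and the matroid is non-orientable.

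The only substantive novelty beyond the proof of Theorem~\ref{mg} is the possibility of $h$ having infinite order, and this is precisely the case that the infinite analog of Theorem~\ref{t3} absorbs by allowing infinite two-way paths. No step poses a genuine obstacle; the main point to verify carefully is simply that the orbit-to-component correspondence in $G_\sigma$ continues to make sense (and is faithful) when $\mathfrak{G}$ is infinite, which is immediate from the cyclic structure of $\langle h\rangle$.
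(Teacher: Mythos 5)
Your proof is correct and follows exactly the route the paper intends: the paper states this theorem without proof, remarking only that it is ``similar to Theorems \ref{t3} and \ref{mg},'' and your argument is precisely that reduction --- transport the problem to $M(\bbN,\sigma)$ via a bijection $\alpha\colon\bbN\to\mathfrak{G}$ (using finite generation only to get countability), identify the components of $G_\sigma$ as $2r$-cycles or infinite two-way paths according to whether $h=g_0\cdot g_1^{-1}$ has finite or infinite order, and invoke the infinite analogue of Theorem \ref{t3}. The only implicit hypothesis worth flagging is $g_0\neq g_1$ (needed for $\sigma$ to be fixed-point-free and for the matroid to be well defined), which you use and which the paper also assumes in Theorem \ref{mg}.
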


We want to remark also that $M(\mathbb{Z}_n, 0, 1 )$ is linearly
representable over the  complex numbers $\mathbb{C}$ (It follows by
\cite [Theorem 2.1] {b4}). Therefore, $M( \mathbb{Z}_n, 0, 1 )$
embeds in the projective plane coordinatized by  $\mathbb{C}$.

\section* {Acknowledgment}

We thank Thomas Zaslavsky for his helpful comments and valuable
advices.

\end{document}